\newtheorem{theorem}{Theorem}[section]
\newtheorem{lemma}[theorem]{Lemma}
\newtheorem{proposition}[theorem]{Proposition}
\theoremstyle{remark}
\newtheorem{remark}[theorem]{Remark}
\newtheorem{question}[theorem]{Question}
\theoremstyle{definition}
\newtheorem{definition}[theorem]{Definition}
\newtheorem{example}[theorem]{Example}
\DeclareMathOperator{\dist}{d}
\DeclareMathOperator{\F}{F}
\DeclareMathOperator{\BF}{BF}
\DeclareMathOperator{\B}{B}
\DeclareMathOperator{\T}{T}
\DeclareMathOperator{\W}{W}
\DeclareMathOperator{\Y}{Y}
\DeclareMathOperator{\Cut}{Cut}
\renewcommand{\L}{\operatorname{L}}
\renewcommand{\c}{\operatorname{c}}
\newlength{\mylen}
\begin{document}

\title{Big free groups acting on $\Lambda$-trees}
\author{Brendon LaBuz}
\address{Saint Francis University, Loretto, PA 15940}
\email{blabuz@@francis.edu}
\subjclass[2010]{20E08, 20F67}
\date{}
\keywords{Big free groups, Hawaiian earring, $\Lambda$-metric spaces, $\Lambda$-trees}

\begin{abstract}
The set of homotopy classes of based paths in the Hawaiian earring has a natural $\mathbb R$-tree structure, but under that metric the action by the fundamental group is not by isometries. Motivated by a suggestion by James W. Cannon and Gregory R. Conner, this paper defines an $\mathbb R^\omega$-metric that does admit for an isometric action by the fundamental group. The space does not become an $\mathbb R^\omega$-tree but is $0$-hyperbolic and embeds in an $\mathbb R^\omega$-tree. 

Cannon and Conner define big free groups $\operatorname{BF}(c)$ for cardinal number $c$ which are a generalization of the fundamental group of the Hawaiian earring. They define a big Cayley graph which coincides with the set of homotopy classes of paths in the case of the Hawaiian earring. Instead of inserting real intervals to obtain the Cayley graph, we can insert $\mathbb R^c$-intervals and obtain a new $\mathbb R^c$-tree which admits an isometric action. In fact we do not need all of $\mathbb R^c$; we can insert $\mathbb Z^c$-intervals and obtain a $\mathbb Z^c$-tree. In the case of the Hawaiian earring we give a combinatorial description of the $\mathbb Z^\omega$-tree and the corresponding action.
\end{abstract}

\maketitle
\tableofcontents

\section{Introduction}

Free groups have the property that their Cayley graphs are trees. Equivalently, the word metric on free groups is $0$-hyperbolic. We can view free groups as the fundamental group of a wedge of circles. Let $c$ be an arbitrary cardinal number and $J$ be an indexing set of cardinality $c$. Set $\W(c)=\bigvee_J S^1$. Then $\pi_1(\W(c))$ is isomorphic to $\F(c)$, the free group on $c$ generators. The generators can be realized as equivalence classes of loops. For each $j\in J$ let $a_j$ denote the equivalence class of a loop in $\W(c)$ that goes once around the $j$th circle. Then $A=\{a_j\}$ is a generating set for $\pi_1(\W(c))$.

The Hawaiian earring $E$ is a space that stands opposed to the wedge of countably infinitely many circles. We can similarly define a ``generating'' set $A$ but this set will not generate $\pi_1(E)$ since a loop in $E$ may traverse infinitely many of the circles. Motivated by this situation, Cannon and Conner define the big free group $\BF(c)$ as the set of possibly infinite products of elements of a generating set of cardinality $c$ \cite{CC}. They show that if $c$ is countably infinite then $\BF(c)$ is isomorphic to $\pi_1(E)$.

Since words in $\BF(c)$ may have infinitely many letters the word metric on $\BF(c)$ is not ideal--it would take on infinite values. For the same reason the Cayley graph of $\BF(c)$ is not connected and therefore is not a tree. In this paper a new word metric on $\BF(c)$ is defined that distinguishes between different generators and takes on values in $\mathbb Z^c$. It turns out that $\BF(c)$ is $0$-hyperbolic under this metric and therefore acts on a $\mathbb Z^c$-tree by isometries.

\section{Big free groups and $\Lambda$-trees}

We start by recalling the definition of big free group.

\subsection{Big free groups}

The notion of a group defined as the set of possibly infinite products of generators has been developed by several authors. We follow the theory of Cannon and Conner \cite{CC}. See that paper for a review of other treatments.

Let $A$ be an alphabet of arbitrary cardinality $c$ and let $A^{-1}$ denote a formal inverse set for $A$. A transfinite word is any function $w:S\to A\cup A^{-1}$ where $S$ is totally ordered and $w^{-1}(a)$ is finite for all $a\in A\cup A^{-1}$. The condition that each $a\in A\cup A^{-1}$ appears only finitely many times helps to avoid the calculation $a^{\infty}=aa^{\infty}\implies a=1$ and also allows the theory to coincide with the property of the Hawaiian earring that no circle can be traversed by a path infinitely many times. Note that if $A$ is countable then $S$ is always countable as the countable union of finite sets.

Two transfinite words $w_1:S_1\to A\cup A^{-1}$ and $w_2:S_2\to A\cup A^{-1}$ are identified if there is an order preserving bijection $\phi:S_1\to S_2$ such that $w_2\circ\phi\equiv w_1$.

A theory of infinite cancellation is required. Given a totally ordered set $S$ and $s,t\in S$ let $[s,t]_S$ denote the interval $\{r\in S:s\leq r\leq t\}$. A transfinite word $w:S\to A\cup A^{-1}$ admits a cancellation $*$ if there is a subset $T$ of $S$ and an involution $*:T\to T$ such that for each $t\in T$, $[t,t*]_S=[t,t*]_T$ ($*$ is complete), $([t,t*]_T))*=[t,t*]_T$ ($*$ is noncrossing) and $w(t*)=w(t)^{-1}$ ($*$ is an inverse pairing). The restriction of $w$ to $S-T$ is a transfinite word that arises from $w$ via the cancellation $*$. The symmetric transitive closure of this relation gives an equivalence relation on transfinite words. We say a transfinite word is reduced if it admits no nonempty cancellations. Every word admits a maximal cancellation by Zorn's Lemma and the resulting word is reduced. There may be more than one maximal cancellation but the resulting word is always the same.

\begin{theorem}[\cite{CC} 3.9]
Each equivalence class of transfinite words in $\BF(c)$ contains exactly one reduced word, up to an order preserving identification.
\end{theorem}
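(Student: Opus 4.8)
The existence half is already in hand: the passage preceding the statement exhibits, via Zorn's Lemma, a maximal cancellation whose residual word is reduced. So the entire content is \emph{uniqueness}, namely that the reduced word does not depend on the choices made, and indeed that any two reduced words in a single equivalence class coincide. The plan is to prove this by a confluence (Church--Rosser) argument for the one-step reduction ``$u$ arises from $w$ by a cancellation,'' which I write $w \to u$.

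First I would record that cancellations compose: if $w \to u$ via a cancellation $*$ on $T \subseteq S$ and $u \to u'$ via a cancellation $*'$ on $T' \subseteq S - T$, then the involution on $T \cup T'$ agreeing with $*$ on $T$ and with $*'$ on $T'$ is again complete, noncrossing, and inverse-pairing for $w$. Each is a direct check on the intervals $[t,t*]_S$: completeness of $*$ and $*'$ forces every element strictly between two partners to lie in $T \cup T'$, noncrossing is inherited because a $T$-partner of an interior point cannot escape the surrounding $*'$-interval, and inverse pairing is immediate since $w$ and $u$ agree off $T$. Hence $w \to u'$ in a single step, $\to$ is already transitive, and the equivalence relation on words is exactly the symmetric transitive closure generated by single cancellations.

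The heart of the matter is confluence: given two cancellations $*_1, *_2$ of the same word $w$, with cancelled sets $T_1, T_2$, the residuals $u_1 = w|_{S - T_1}$ and $u_2 = w|_{S - T_2}$ possess a common reduct. To analyze how the two cancellations interact I would form the graph $G$ on vertex set $T_1 \cup T_2$ whose edges are the matched pairs $\{t, t*_1\}$ and $\{t, t*_2\}$; every vertex has degree at most two, so $G$ is a disjoint union of paths and cycles alternating between $*_1$-edges and $*_2$-edges. Two observations tame it. The inverse-pairing condition forces the letters along any edge to be mutually inverse, so the positions in one component all carry a single generator $a$ or its inverse, alternating along the component. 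The finiteness condition---that each letter occurs only finitely often---then bounds the number of such positions, so \emph{every component of $G$ is finite}. This is exactly where the hypothesis on transfinite words is essential: it reduces an a priori infinite reconciliation to finitely many finite ones.

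Finally I would resolve each finite component separately. Since completeness makes each cancellation a nested (laminar) matching, a component is a finite alternating word in $a, a^{-1}$ carrying two nested matchings, and the classical uniqueness of reduced forms in a free group shows that deleting either matched set and reducing the remainder yields the same net spelling. Splicing these per-component reductions back in---anchored by the positions of $S - (T_1 \cup T_2)$, which both cancellations leave untouched, and tracking the ambient total order---produces an order- and letter-preserving identification of the two reduced residuals, i.e. a common reduct of $u_1$ and $u_2$. With transitivity and confluence in place, the Church--Rosser principle shows that any two equivalent words have a common reduct; applied to two reduced words in one class, which admit no nonempty cancellation, this forces them to be equal. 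The hard part will be the confluence step, and specifically the order bookkeeping when splicing the finite components back together, since a component need not be an interval of $S$ (one monochromatic block may be nested inside the span of another). The finiteness of the components is precisely what keeps this bookkeeping finite and hence manageable.
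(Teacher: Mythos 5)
First, a point of reference: the paper does not prove this statement at all --- it is imported verbatim from \cite[Theorem 3.9]{CC} --- so your attempt can only be measured against the standard argument, not against a proof in this text. Your overall architecture is the right one: cancellations compose (your verification is correct, and in fact easier than you make it, since completeness of $*$ says $[t,t*]_S\subseteq T$, so no element of $T'$ can lie strictly inside a $*$-pair at all), hence the equivalence relation is generated by single steps; strong confluence of one-step peaks then yields Church--Rosser, and two equivalent reduced words must coincide. Your two structural observations are also correct and are genuinely the crux: the union of the two pairings is a disjoint union of alternating paths and cycles, each component is monochromatic in a single letter $a^{\pm1}$, and therefore each component is \emph{finite} by the defining finiteness condition on transfinite words. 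That is exactly where the hypothesis enters.

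The genuine gap is in your per-component resolution, which as stated does not work. A component $C$ is in general not an interval of $S$, and the restrictions of $*_1$ and $*_2$ to $C$ are \emph{not} complete matchings of the finite word spelled by $C$ (positions of other components, carrying other letters, interleave with it), so ``classical uniqueness of reduced forms in a free group'' has nothing to act on componentwise. In fact the component combinatorics is simpler than you suggest: every interior vertex of a path has degree two, hence lies in $T_1\cap T_2$ and is deleted in \emph{both} residuals; the only survivors are path endpoints, so a cycle contributes nothing, an odd path contributes one identically-labelled letter to each residual, and an even path contributes an inverse pair of letters to exactly one residual. The entire content of confluence is then the assertion you defer as ``bookkeeping'': (i) pairing the two surviving endpoints of each even path is a \emph{complete, noncrossing} cancellation of the corresponding residual, and (ii) the map sending each odd path's $*_2$-only endpoint to its $*_1$-only endpoint, and fixing $S-(T_1\cup T_2)$, is order-preserving. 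Neither follows from free-group reduction; both require interval-chasing with the completeness and noncrossing axioms --- for instance, that the $S$-span of a path is covered by $T_1\cup T_2$ (the intervals between consecutive path vertices lie wholly in $T_1$ or $T_2$ and chain together), and that a pair of one matching cannot straddle an endpoint covered only by the other matching, since completeness would force that endpoint into the wrong $T_i$. These verifications are true and, with the finiteness of components, finitely checkable, so your plan is completable; but the step you route through ``uniqueness of reduced words in $F(a)$'' is the heart of the proof, not a citation, and your sketch currently replaces it with an appeal that does not apply.
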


The product of two transfinite words $w_1:S_1\to A\cup A^{-1}$ and $w_2:S_2\to A\cup A^{-1}$ is defined as the transfinite word $w_1w_2:S_1S_2\to A\cup A^{-1}$ where $S_1S_2$ is the disjoint union of $S_1$ and $S_2$ (given the obvious ordering) and $w_1w_2|S_1\equiv w_1$ and $w_1w_2|S_2\equiv w_2$. The inverse of a transfinite word $w:S\to A\cup A^{-1}$ is the word $w^{-1}:\overline S\to A\cup A^{-1}$ where $\overline S$ reverses the ordering on $S$ and $w^{-1}(s)=w(s)$ for all $s\in S$. Thus we have a group $\BF(c)$, the big free group on an alphabet of cardinality $c$. We typically represent $\BF(c)$ as the set of all reduced transfinite words.

Cannon and Conner define the big Cayley graph $\Gamma(\BF(c))$ (\cite[Section 6]{CC}) as follows. Given a reduced transfinite word $w:S\to A\cup A^{-1}$, form the Dedekind cut space $\Cut(w)=\Cut(S)$ and then insert the real open interval $(0,1)$ between adjacent points to form the ``big interval'' $I_w$. The adjacent points are of the form $(-\infty, s)$ and $(-\infty, s]$ so they correspond to the element $w(s)\in A\cup A^{-1}$. Each inserted interval is labeled by this element. Then $\Gamma(\BF(c))$ is formed by taking the disjoint union of the $I_w$ and, for each pair, identifying the largest initial segment on which all of the labels agree. There is an action of $\BF(c)$ on $\Gamma(\BF(c))$ (see Section \ref{CayleyAction} of this paper).

In the case of the countably infinite cardinal $\aleph_0$, $\Gamma(\BF(\aleph_0))$ is in one-to-one correspondence with the space of fixed endpoint homotopy classes of paths in the Hawaiian earring and this correspondence suggests a metric for $\Gamma(\BF(\aleph_0))$ where the action is by isometries\footnote{In \cite[Theorem 6.1]{CC} the authors claim there is no metric on $\Gamma(\BF(\aleph_0))$ for which the action is by isometries. They meant no $\mathbb R$-tree metric.} \cite[Lemma 2.11]{FZ}.
However, this metric loses the large scale structure of $\Gamma(\BF(\aleph_0))$ and does not make it an $\mathbb R$-tree. In fact, under a basic condition, there is no $\mathbb R$-tree metric on $\Gamma(\BF(c))$ for which the action is by isometries (see Proposition \ref{NoTree} of this paper).

Motivated by these issues, Cannon and Conner suggest a ``big metric'' for $\Gamma(\BF(c))$. Their definition uses the tree structure of $\Gamma(\BF(c))$ to find the shortest big interval between points $x,y\in\Gamma (BF(c))$. The big metric $\dist:\Gamma(\BF(c))\times\Gamma(\BF(c))\to \mathbb R_{\geq 0}^c$ counts, for each $a\in A$, the number of occurrences of $a$ and $a^{-1}$ (with fractions occurring at the ends) in that interval. Canon and Conner suggest deriving a topology from this metric by fixing $\epsilon$ neighborhoods of 0 in  $\mathbb R_{\geq 0}^c$. We follow a different path by considering $\Gamma(\BF(c))$ as a
$\Lambda$-metric space with $\Lambda=\mathbb R^c$.

\subsection{$\Lambda$-metric spaces}

The theory of $\Lambda$-metric spaces is developed in \cite{C} and that text is the reference for the facts stated in this section. Given an abelian group $\Lambda$ and a total order $\leq$ on $\Lambda$, $\Lambda$ is an ordered abelian group if for all $a,b,c\in\Lambda$, $a\leq b$ implies $a+c\leq b+c$. Given a set $X$, a $\Lambda$-metric on $X$ is a function $\dist:X\times X\to\Lambda$ such that the usual conditions are satisfied. For all $x,y,z\in X$,

\begin{itemize}
\item $\dist(x,y)\geq 0$ 

\item $\dist(x,y)=0$ if and only if $x=y$

\item $\dist(x,y)=\dist(y,x)$

\item $\dist(x,y)\leq \dist(x,z)+\dist(z,y)$.
\end{itemize}
The topology induced by the metric is defined by the basic elements $\B(x,\epsilon)=\{y\in X:\dist(x,y)<\epsilon\}$ where $\epsilon\in \Lambda$ and $\epsilon>0$.

Let $\F(c)$ be the free group on $c$ generators where $c$ is a cardinal number. Given words $w,v\in\F(c)$, the word metric counts the number of letters in the reduced form of $w^{-1}v$. Then $\F(c)$ under the word metric is a $\mathbb Z$-metric space. Notice any $\mathbb Z$-metric space is discrete since $\mathbb Z$ has a smallest positive element.

As mentioned above the word metric does not work well for $\BF(c)$ because words may contain infinitely many letters. Given $w,v\in \BF(c)$ we count, for each $a\in A$, the occurrences of $a$ and $a^{-1}$ in $w^{-1}v$ and therefore wish to define a $\mathbb Z^c$-metric. Thus we need an order on $\mathbb Z^c$. To define the order we use an order on $A$. In the case of the fundamental group of the Hawaiian earring it is natural to consider the order on $A$ when defining a metric since these elements represent circles of decreasing size. In fact we will require $A$ to be well ordered so we define the big free group $\BF(o)$ for the ordinal number $o$ of the well ordered set $A$. We start by giving a general definition for lexicographic orders.

\begin{definition}[\cite{H}]
Let $A$ be a totally ordered indexing set and for each $a\in A$ let $S_a$ be a partially ordered set. Given $(s_a),(t_a)\in\prod_A S_a$, define $(s_a)< (t_a)$ if there exists $a\in A$ with $s_a< t_a$ and $s_b=t_b$ for all $b<a$. Define $(s_a)\leq (t_a)$ if $(s_a)<(t_a)$ or $(s_a)=(t_a)$.
\end{definition}

The indexing set $A$ is required to be totally ordered so that $\leq$ is a partial order, the lexicographic order. In the case that the $S_a$ are partially ordered abelian groups (partially ordered sets that satisfy $a\leq b\implies a+c\leq b+c$) then $\prod_A S_a$ is a partially ordered abelian group. In the case that the $S_a$ are totally ordered, the product may not be totally ordered. It is totally ordered provided $A$ is well ordered.

\begin{lemma}
Let $A$ be an indexing set and for each $a\in A$ let $S_a$ be a totally ordered set. If $A$ is well ordered then the lexicographic order on $\prod_A S_a$ is a total order. The converse holds provided the sets $S_a$ have at least two elements.

\end{lemma}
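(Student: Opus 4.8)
The plan is to leverage the fact, already recorded in the preceding discussion, that for totally ordered $A$ the lexicographic relation is a partial order on $\prod_A S_a$; this hands us reflexivity, antisymmetry, and transitivity without further work. What remains, and precisely what the well-ordering hypothesis is meant to supply, is \emph{totality}: I must show that any two elements are comparable.

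First I would take two distinct tuples $(s_a),(t_a)\in\prod_A S_a$ and examine the set of coordinates on which they disagree, $D=\{a\in A: s_a\neq t_a\}$. Since the tuples are distinct, $D$ is nonempty, so by the well-ordering of $A$ it has a least element $a_0$. By minimality of $a_0$ we have $s_b=t_b$ for every $b<a_0$, so the ``initial segments'' of the two tuples agree up to $a_0$.

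Next, because $S_{a_0}$ is totally ordered and $s_{a_0}\neq t_{a_0}$, exactly one of $s_{a_0}<t_{a_0}$ or $t_{a_0}<s_{a_0}$ holds. In the first case, taking $a=a_0$ in the definition witnesses $(s_a)\leq(t_a)$, since $s_{a_0}\leq t_{a_0}$ while $s_b=t_b$ for all $b<a_0$; the second case symmetrically yields $(t_a)\leq(s_a)$. Combined with the partial-order axioms already in hand, this establishes that the lexicographic order is total.

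I do not anticipate a genuine obstacle in the verification; the single point deserving emphasis is \emph{why} well-ordering, rather than a mere total order on $A$, is indispensable. The argument hinges entirely on $D$ possessing a least element, which is exactly what a well-ordering guarantees. Were $A$ only totally ordered, $D$ could fail to have a minimum — for instance if $A$ admits an infinite descending chain such as $\mathbb Z$ — and then there is no coordinate at which to read off the comparison; in that situation one can exhibit tuples that are incomparable, matching the earlier remark that the product need not be totally ordered. Pinpointing this necessity is the only substantive observation, the rest being routine once the least disagreement coordinate is located.
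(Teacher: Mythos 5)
Your proposal is correct and follows essentially the same argument as the paper: both consider the set of coordinates where the two tuples differ, invoke the well-ordering of $A$ to extract its least element, and compare the tuples at that coordinate using the total order on $S_{a_0}$. Your additional remark on why well-ordering (rather than mere total ordering of $A$) is essential is a nice observation, but the core proof is identical.
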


\begin{proof}
Suppose $A$ is well ordered. Suppose $(s_a),(t_a)\in\prod_A S_a$. Let $B=\{a\in A:s_a\neq t_a\}$. If $B=\emptyset$ then $(s_a)=(t_a)$. Otherwise $B$ has a least element $a$ and either $s_a<t_a$ or $s_a>t_a$. For the converse, suppose the sets $S_a$ have at least two elements. If $A$ is not well ordered then there is an infinite decreasing sequence and we can construct two elements of $\prod_A S_a$ that are not comparable.
\end{proof}

Now that we have an order on $\mathbb Z^o$ we are in a position to define a $\mathbb Z^o$-metric on $\BF(o)$. Given $w,v\in\BF(o)$, define $\dist(w,v)=(n_a)$ where $n_a$ counts the number of occurrences of $a$ and $a^{-1}$ in the reduced form of $w^{-1}v$. It is obviously symmetric and positive definite. To see the triangle inequality holds, suppose $w,v,u\in\BF(o)$. Consider the reduced form of $w^{-1}v$ and the reduced form of $v^{-1}u$ concatenated. Since for each $a\in A$ the reduced form of a word has the same or fewer occurrences of $a$ and $a^{-1}$ as the unreduced form, we must have $\dist(w,u)\leq \dist(w,v)+\dist(v,u)$.

\subsection{Geodesic $\Lambda$-metric spaces}

An important example of a $\Lambda$-metric space is $\Lambda$ itself where $\dist(a,b)=|a-b|$ for $a,b\in\Lambda$ ($|a|$ is defined in the usual way). Then we can define a $\Lambda$-geodesic in a $\Lambda$-metric space $X$ as an isometry $\alpha:[a,b]_{\Lambda}\to X$ (the interval $[a,b]_{\Lambda}$ is also defined in the usual way). We can assume $a=0$ (\cite[p.8]{C}). We call the image a segment. We will sometimes refer to a $\Lambda$-geodesic as a geodesic if the context is clear. A space is called $\Lambda$-geodesic if every pair of points can be joined by a geodesic.

The free group $\F(c)$ under the word metric is $\mathbb Z$-geodesic. Given reduced words $w,v\in\F(c)$, let $l$ be the number of initial letters that $w$ and $v$ have in common (we could have $l=0$). Suppose $w$ has $n$ letters and $v$ has $m$ letters. For each $l\leq i\leq n$ let $w_i$ be the word obtained by removing the last $n-i$ letters from $w$ and for each $l\leq i\leq m$ let $v_i$ be the word obtained by removing the last $m-i$ letters from $v$. Then $\{w_n,w_{n-1},\ldots,w_l,v_{l+1},v_{l+2},\ldots, v_m\}$ is a segment with endpoints $w$ and $v$.

On the other hand, the big free group $\BF(o)$ is not $\mathbb Z^o$-geodesic provided $o>1$. Let $a$ be the first generator and suppose $\alpha:[(0,0,\ldots),(1,0,0,0,\ldots)]_{\mathbb Z^o}\to \BF(o)$ is a geodesic from $\iota$ to $a$ where $\iota$ is the empty word. But $(1,-1,0,0,\ldots)\in[(0,0,\ldots),(1,0,0,\ldots)]_{\mathbb Z^o}$ and there is no word $w\in\BF(o)$ with $\dist(\iota,w)=(1,-1,0,0\ldots)$. We will see that $\BF(o)$ embeds isometrically in a geodesic $\mathbb Z^o$-metric space and that it acts on that space by isometries.

\subsection{$\Lambda$-trees}

A $\Lambda$-tree is defined to be a $\Lambda$-geodesic $\Lambda$-metric space $X$ such that following conditions are satisfied.
\begin{enumerate}
\item If two segments in $X$ intersect in a single point, which is an endpoint of both, then their union is a segment.

\item If two segments in $X$ have a common endpoint, then their intersection is also a segment.
\end{enumerate}
In the case that $\Lambda=\mathbb Z$ or $\Lambda=\mathbb R$, condition (2) is automatically satisfied (\cite[Lemma 1.2.3]{C}). A $\Lambda$-tree is uniquely geodesic (\cite[Lemma 1.3.6]{C}) and if $X$ is a $\Lambda$-tree, for $x,y\in X$ we write $[x,y]$ to denote the unique segment between $x$ and $y$.

The definition above is formulated in terms of basic facts about classical trees. There is another characterization that relies on the concept of a metric space being $\delta$-hyperbolic. We recount the definition and then comment on the case of $\delta=0$ in relation to trees.

Let $X$ be a $\Lambda$-metric space and let $v\in X$ be a basepoint. Given $x,y\in X$, the Gromov product of $x$ and $y$ with respect to $v$ is $(x\cdot y)_v= \frac{1}{2}(\dist(v,x)+\dist(v,y)-\dist(x,y))$. We usually suppress the notation of the basepoint and write $x\cdot y$. Notice in general we may have $x\cdot y\notin \Lambda$ (it is in $\frac{1}{2}\Lambda$). However, in the case of a $\Lambda$-tree, $x\cdot y\in\Lambda$ and it measures how long the segments $[v,x]$ and $[v,y]$ coincide. For there is a $u\in X$ with $[v,x]\cap [v,y]=[v,u]$ and $[x,u]\cup [u,y]=[x,y]$ (see Figure \ref{tree_distance}). Then
\begin{align*}
x\cdot y&=\frac{1}{2}(\dist(v,x)+\dist(v,y)-\dist(x,y))\\
&=\frac{1}{2}(\dist(v,u)+\dist(u,x)+\dist(v,u)+\dist(u,y)-\dist(u,x)-\dist(u,y))\\
&=\dist(v,u).\end{align*}
Denote the point $u$ in the above argument as $\Y(v,x,y)$. It does not depend on the order of $v,x,y$.

\begin{figure}
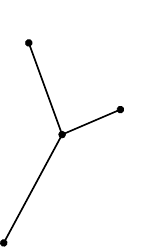
\caption{The Gromov product}
\label{tree_distance}
\end{figure}

In the case of the the free group $\F(c)$, if $w,v\in\F(c)$ and $\iota$ is the basepoint then $w\cdot v$ counts the number of initial letters that $w$ and $v$ have in common. Similarly, for $w,v\in\BF(o)$, $w\cdot v$ counts, for each $a\in A$, the number of occurrences of $a$ and $a^{-1}$ in the initial letters that $w$ and $v$ have in common.

Given $\delta\in\Lambda$ and $v\in X$, the $\Lambda$-metric space $X$ is $\delta$-hyperbolic with respect to $v$ if for all $x,y,z\in X$, $(x\cdot y)_v\geq\min\{(x\cdot z)_v,(y\cdot z)_v\}-\delta$. We say $X$ is $\delta$-hyperbolic if it is $\delta$-hyperbolic for all $v\in X$. If $X$ is $\delta$-hyperbolic with respect to one basepoint then it is $2\delta$-hyperbolic with respect to any other basepoint. Thus if a space is $0$-hyperbolic with respect to one basepoint then it is $0$-hyperbolic. 

In the case of $0$-hyperbolicity the requirement becomes $x\cdot y\geq \min\{x\cdot z,y\cdot z\}$. But we also have $x\cdot z\geq \min\{x\cdot y,y\cdot z\}$ and $y\cdot z\geq \min\{x\cdot y,x\cdot z\}$. By choosing the smallest of $x\cdot y$, $x\cdot z$, and $y\cdot z$ we see that it must be equal to one of the other two. In other words, two of $x\cdot y$, $x\cdot z$, and $y\cdot z$ are equal and they are less than or equal to the third. 

Both $\F(c)$ and $\BF(o)$ are $0$-hyperbolic. Suppose $w,v,u\in\F(c)$. Suppose without loss of generality that $w\cdot v\leq v\cdot u$. Then $v$ and $u$ have at least as many initial letters in common as $w$ and $v$ so $w$ and $u$ must have the same number of initial letters in common, that is, $w\cdot u=w\cdot v$. A similar argument shows that $\BF(o)$ is $0$-hyperbolic.

The following theorem combines \cite[Lemma 2.1.6]{C} and \cite[Lemma 2.4.3]{C}.

\begin{theorem}
Suppose $X$ is a geodesic $\Lambda$-metric space. The following statements are equivalent.

\begin{enumerate}
\item $X$ is $0$-hyperbolic and there is a basepoint $v\in X$ such that $(x\cdot y)_v\in\Lambda$ for all $x,y\in X$.

\item $X$ is a $\Lambda$-tree.
\end{enumerate}
\end{theorem}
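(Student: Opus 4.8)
The plan is to prove the two implications separately, handling $(2)\Rightarrow(1)$ first since much of it is already recorded in the excerpt. A $\Lambda$-tree is geodesic by definition and uniquely geodesic by \cite[Lemma 1.3.6]{C}. Fix a basepoint $v$ and points $x,y$: the segments $[v,x]$ and $[v,y]$ share the endpoint $v$, so by tree axiom (2) their intersection is a segment of the form $[v,u_{xy}]$, and the computation already carried out in the excerpt gives $x\cdot y=\dist(v,u_{xy})\in\Lambda$. It remains to check $0$-hyperbolicity, i.e. $x\cdot y\ge\min\{x\cdot z,y\cdot z\}$. Writing $a=\dist(v,u_{xy})$, $b=\dist(v,u_{xz})$, $c=\dist(v,u_{yz})$, the key observation is that $[v,u_{xz}]$ and $[v,u_{yz}]$ are both initial subsegments of the single segment $[v,z]$ (since $u_{xz},u_{yz}\in[v,z]$ and geodesics are unique), hence are nested. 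If $b\le c$ then $u_{xz}\in[v,u_{yz}]\subseteq[v,y]$, so $u_{xz}\in[v,x]\cap[v,y]=[v,u_{xy}]$ and $b\le a$; the case $c\le b$ is symmetric and gives $c\le a$. Either way $a\ge\min\{b,c\}$, which is $0$-hyperbolicity with respect to $v$, and $v$ was arbitrary.

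For $(1)\Rightarrow(2)$ I would first upgrade both halves of hypothesis $(1)$ to every basepoint. The excerpt already notes that $0$-hyperbolicity at one basepoint propagates to all. For the Gromov product, expanding each distance through $v$ yields the change-of-basepoint identity
\[(x\cdot y)_a=\dist(v,a)-(a\cdot x)_v-(a\cdot y)_v+(x\cdot y)_v,\]
so if every product at $v$ lies in $\Lambda$ then so does every product at an arbitrary $a$. Next I would show $X$ is uniquely geodesic: given two geodesics $\alpha,\beta$ from $x$ to $y$ and a parameter $t$ in their common domain, the points $p=\alpha(t)$, $q=\beta(t)$ satisfy $(p\cdot y)_x=(q\cdot y)_x=t$, so $0$-hyperbolicity forces $(p\cdot q)_x\ge t$ and hence $\dist(p,q)\le 0$, giving $p=q$.

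The heart of the argument is then a branch-point construction at an arbitrary basepoint $v$. Given $x,y$, set $u=\alpha_x\big((x\cdot y)_v\big)$, the point of $[v,x]$ at distance $x\cdot y$ from $v$; this is well defined precisely because $(x\cdot y)_v\in\Lambda$. Using $0$-hyperbolicity repeatedly one shows $(u\cdot y)_v=x\cdot y$, that the analogous point on $[v,y]$ coincides with $u$ (again via the $\dist\le 0$ trick), and that no point of $[v,x]\cap[v,y]$ lies farther from $v$ than $u$; together these give $[v,x]\cap[v,y]=[v,u]$, while $\dist(x,u)+\dist(u,y)=\dist(x,y)$ shows $[x,u]\cup[u,y]=[x,y]$. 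This point is $\Y(v,x,y)$.

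With this in hand the two tree axioms follow. For axiom (2), applying the construction with basepoint $a$ gives $[a,b]\cap[a,c]=[a,\Y(a,b,c)]$, a segment. For axiom (1), suppose $[a,b]\cap[b,c]=\{b\}$; the construction with basepoint $a$ applied to $b,c$ produces $u=\Y(a,b,c)$, which lies in $[a,b]\cap[b,c]=\{b\}$, so $u=b$, whence $b\in[a,c]$ and $\dist(a,b)+\dist(b,c)=\dist(a,c)$, and uniqueness of geodesics identifies $[a,b]\cup[b,c]$ with the segment $[a,c]$. I expect the main obstacle to be the branch-point construction: it is where the hypothesis $(x\cdot y)_v\in\Lambda$ is genuinely used (to guarantee that $\Y(v,x,y)$ is an honest point of $X$ rather than a gap in a geodesic), and where the several $0$-hyperbolicity inequalities must be assembled in the right order; everything afterward is bookkeeping with unique geodesics.
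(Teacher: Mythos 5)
Your proposal is correct in outline, but note that the paper itself offers no proof of this theorem: it states it as the combination of Lemmas 2.1.6 and 2.4.3 of \cite{C}, so the relevant comparison is with that source, and your argument is essentially a faithful reconstruction of it. Your change-of-basepoint identity $(x\cdot y)_a=\dist(v,a)-(a\cdot x)_v-(a\cdot y)_v+(x\cdot y)_v$ (which checks out by direct expansion) is exactly the mechanism by which \cite[Lemma 2.4.3]{C} upgrades the one-basepoint hypothesis to all basepoints, and your branch-point construction of $u=\Y(v,x,y)$ via the ``$\dist\le 0$ trick'' is the core of \cite[Lemma 2.1.6]{C}; I verified that each of your claimed steps can be assembled from $0$-hyperbolicity in the order you indicate. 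Two places where your sketch silently leans on a fact that itself needs the same machinery deserve flagging. First, in $(2)\Rightarrow(1)$ the excerpt's computation of $x\cdot y=\dist(v,u)$ presupposes $\dist(x,y)=\dist(x,u)+\dist(u,y)$; in a $\Lambda$-tree this is not free: one must first show $[u,x]\cap[u,y]=\{u\}$ (apply axiom (2) at $u$ and note that $[v,u]\cap[u,x]=\{u\}$ inside the segment $[v,x]$) before axiom (1) can be invoked to make $[x,u]\cup[u,y]$ a segment. Second, in $(1)\Rightarrow(2)$ the inference from $\dist(x,u)+\dist(u,y)=\dist(x,y)$ to $[x,u]\cup[u,y]=[x,y]$ does not follow from unique geodesics alone --- metric betweenness does not place a point on the geodesic in a general uniquely geodesic space --- but it does follow from one more round of $0$-hyperbolicity: if $w\in[x,y]$ is the point at distance $\dist(x,u)$ from $x$, then $(u\cdot y)_x=(w\cdot y)_x=\dist(x,u)$ forces $(u\cdot w)_x\ge\dist(x,u)$ and hence $\dist(u,w)\le 0$, so $u=w$. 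Both repairs use precisely the trick you already deploy repeatedly, so these are omissions of routine detail rather than gaps; what your write-up buys over the paper is a self-contained proof that correctly identifies where the hypothesis $(x\cdot y)_v\in\Lambda$ is load-bearing, namely in realizing $\Y(v,x,y)$ as an actual point of $X$ rather than a gap in a geodesic.
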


Thus $\F(c)$ is a $\mathbb Z$-tree. Given any $\mathbb Z$-tree $X$ there is a classical tree $\Gamma$ with $X$ as the set of vertices and the $\mathbb Z$-metric of $X$ is the path metric on  $\Gamma$. In the case of $\F(c)$, $\Gamma$ is the Cayley graph.

\subsection{Groups acting on $\Lambda$-trees}

We know that $\BF(o)$ is not a $\mathbb Z^o$-tree since it is not $\mathbb Z^o$-geodesic. However there is a standard construction of a $\Lambda$-tree from a $\Lambda$-metric space that satisfies (1) in the above theorem. In the context of the space being a group we obtain an isometric action of the group on the $\Lambda$-tree. It is convenient to use the notation of a length function.

Given a group $G$ and an ordered abelian group $\Lambda$, a length function is a function $\L:G\to \Lambda$ such that the following conditions are satisfied.
\begin{enumerate}
\item $\L(g)=0$ if and only if $g=1$.

\item $\L(g)=L(g^{-1})$ for all $g\in G$.

\item For all $g,h,k\in G$, $\c(g,h)\geq\min\{\c(g,k),\c(h,k)\}$ where $\c(g,h)=\frac{1}{2}(\L(g)+\L(h)-\L(g^{-1}h))$.
\end{enumerate}
This definition is that of a Lyndon length function in \cite{C} except that only the reverse direction in condition (1) is assumed there. A length function $\L$ induces a metric $\dist$ on $G$ where $\dist(g,h)=\L(g^{-1}h)$ for $g,h\in G$ (condition (3) implies that the triangle inequality holds). Notice $\c(g,h)=(g\cdot h)_1$. Because of condition (3), $G$ is $0$-hyperbolic under $\dist$.

Given $w\in\F(c)$, let $\L(w)$ be the number of letters in the reduced word $w$. Then the induced metric is the word metric. Similarly, for $w\in\BF(c)$, set $\L(w)=(n_a)\in\mathbb Z^o$ where for each $a\in A$, $n_a$ is the number of occurrences of $a$ and $a^{-1}$ in the reduced word $w$. 

If a group $G$ has a length function such that $\c(g,h)\in\Lambda$ for all $g,h\in G$, then there is a canonical $\Lambda$-tree on which it acts by isometries. The $\Lambda$-tree $\T(G)$ is constructed in \cite[Theorem 2.4.6]{C} by taking the disjoint union of $\Lambda$-intervals $[0,\L(g)]$ for each $g\in G$ and then identifying $n\in [0,\L(g)]$ and $n\in [0,\L(h)]$ if $n\leq \c(g,h)$. Denote the equivalence class of $n\in [0,\L(g)]$ by $\langle n,g\rangle$.

To see the appropriate metric to put on $\T(G)$ let us examine  the metric of $\Lambda$-trees more closely. The following calculation is from \cite[Lemma 2.1.2(2)]{C}. Suppose $X$ is a $\Lambda$-tree with basepoint $v$. Let $x,y\in X$ and set $u=\Y(v,x,y)$ as in Figure \ref{tree_distance}. Let $x_n\in[v,x]$ be the point that is distance $n$ from $v$ and $y_m\in [v,y]$ be the point that is distance $m$ from $v$. Then $\dist(x_n,y_m)=n+m-2\min\{n,m,x\cdot y\}$. For if $n\leq x\cdot y$ then $x_n,y_m\in[v,y]$ so $\dist(x_n,y_m)=|n-m|$. A symmetric statement holds if $m\leq x\cdot y$. If $n>x\cdot y$ and $m>x\cdot y$ then $x_n,y_m\in [x,u]\cup[u,y]=[x,y]$ so
\begin{align*}\dist(x_n,y_m)&=\dist(x_n,u)+\dist(u,y_m)\\
&=(\dist(x_n,v)-x\cdot y)+(\dist(y_m,v)-x\cdot y)\\
&=n+m-2x\cdot y.\end{align*}
We use the same formula to define the metric on $\T(G)$; given $\langle n,g\rangle,\langle m,h\rangle\in\T(G)$, set $\dist(\langle n,g\rangle,\langle m,h\rangle)=n+m-2\min\{n,m,\c(g,h)\}$. Then $\T(G)$ is a $\Lambda$-tree. Notice $G$ embeds isometrically in $\T(G)$ where $g\in G$ is sent to $\langle \L(g),g\rangle$. We may just write $g\in \T(G)$ with the understanding that $g=\langle \L(g),g\rangle$.

There is an action of $G$ on $\T(G)$ by isometries where, given $\langle n,g\rangle\in\T(G)$ and $h\in G$, $\langle n,g\rangle$ is sent to the point on the segment $[h,hg]$ that is distance $n$ from $h$. 

\begin{theorem}
The action of $G$ on $\T(G)$ is by isometries.
\end{theorem}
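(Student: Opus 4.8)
The plan is to give an explicit description of the map $\phi_h\colon\T(G)\to\T(G)$ induced by $h\in G$ and then reduce the isometry claim to the $\Lambda$-tree distance formula recalled immediately before the statement, combined with left-invariance of the metric on $G$. First I would note that $\T(G)$ is a $\Lambda$-tree, hence uniquely geodesic, so each segment $[h,hg]$ is unambiguous; since $G$ embeds isometrically via $g\mapsto\langle\L(g),g\rangle$ and $\dist(h,hg)=\L(h^{-1}(hg))=\L(g)$, the bound $0\le n\le\L(g)$ guarantees that $\phi_h(\langle n,g\rangle)$, the point of $[h,hg]$ at distance $n$ from $h$, is defined.

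Before computing distances I would check that $\phi_h$ is well defined on equivalence classes. If $\langle n,g\rangle=\langle n,g'\rangle$, i.e.\ $n\le\c(g,g')$, I must show the distance-$n$ point of $[h,hg]$ from $h$ equals the distance-$n$ point of $[h,hg']$ from $h$. In a $\Lambda$-tree two segments out of a common point $h$ coincide up to distance $(hg\cdot hg')_h$ from $h$, so this reduces to the single identity $(hg\cdot hg')_h=\c(g,g')$, which I establish next and which gives $(hg\cdot hg')_h\ge n$.

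The core of the argument is a Gromov-product computation based at $h$. For $x=\langle n,g\rangle$ and $y=\langle m,g'\rangle$, the images $\phi_h(x)$ and $\phi_h(y)$ sit at distances $n$ and $m$ from $h$ along the segments $[h,hg]$ and $[h,hg']$, so the formula $\dist(x_n,y_m)=n+m-2\min\{n,m,x\cdot y\}$ recalled before the statement applies with basepoint $v=h$ and endpoints $hg,hg'$, yielding $\dist(\phi_h(x),\phi_h(y))=n+m-2\min\{n,m,(hg\cdot hg')_h\}$. It remains to evaluate $(hg\cdot hg')_h$. Since $h,hg,hg'$ lie in the isometric copy of $G$, I can compute $\dist(h,hg)=\L(g)$, $\dist(h,hg')=\L(g')$, and, using that left translation is an isometry of $(G,\dist)$, $\dist(hg,hg')=\L((hg)^{-1}hg')=\L(g^{-1}g')=\dist(g,g')$. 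Hence $(hg\cdot hg')_h=\frac{1}{2}(\L(g)+\L(g')-\L(g^{-1}g'))=\c(g,g')=(g\cdot g')_1$. Substituting back gives $\dist(\phi_h(x),\phi_h(y))=n+m-2\min\{n,m,\c(g,g')\}=\dist(x,y)$, which is precisely the isometry statement.

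Finally, to confirm a genuine action by isometries I would verify $\phi_1=\mathrm{id}$ and $\phi_{h_1h_2}=\phi_{h_1}\circ\phi_{h_2}$; the latter holds because $\phi_{h_1}$ is an isometry carrying $[h_2,h_2g]$ onto the unique segment $[h_1h_2,h_1h_2g]$, so both sides send $\langle n,g\rangle$ to the distance-$n$ point from $h_1h_2$ on that segment. Then $\phi_h\circ\phi_{h^{-1}}=\mathrm{id}$ shows each $\phi_h$ is a bijective isometry. I expect the main obstacle to be the identity $(hg\cdot hg')_h=\c(g,g')$: recognizing that the Gromov product computed in the tree at the translated basepoint $h$ collapses to $\c(g,g')$ is exactly where left-invariance of the length metric is used, and it is the single fact that drives both well-definedness and the distance computation.
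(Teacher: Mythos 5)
Your proposal is correct, and its core coincides with the paper's own proof: both arguments hinge on the single change-of-basepoint identity $\c(g,k)=(hg\cdot hk)_h$, obtained from left-invariance of $\dist(g,h)=\L(g^{-1}h)$ on the isometrically embedded copy of $G$, and both use it in the same two places --- well-definedness (the two candidate image points lie on the common initial segment $[h,u]$ with $u=\Y(h,hg,hk)$, at the same distance $n\le(hg\cdot hk)_h$ from $h$) and the distance computation $\dist(\phi_h(x),\phi_h(y))=n+m-2\min\{n,m,\c(g,k)\}=\dist(x,y)$. Two differences are worth recording. First, a cosmetic one: you invoke the recalled formula $\dist(x_n,y_m)=n+m-2\min\{n,m,x\cdot y\}$ at the basepoint $h$ in one stroke, while the paper reruns the three-case analysis ($n\le\c(g,k)$, $m\le\c(g,k)$, both exceed) by hand; these are the same computation. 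Second, a genuine one: the paper proves surjectivity by exhibiting explicit preimages, splitting at $w=\Y(1,h,k)$, namely $h\langle\L(h)-m,h^{-1}\rangle=\langle m,k\rangle$ when $m\le\c(h,k)$ and $h\langle\L(h)+m-2\c(h,k),h^{-1}k\rangle=\langle m,k\rangle$ when $m\ge\c(h,k)$, whereas you derive bijectivity abstractly from $\phi_{h_1h_2}=\phi_{h_1}\circ\phi_{h_2}$ and $\phi_1=\mathrm{id}$, making $\phi_{h^{-1}}$ a two-sided inverse. Your route buys a verification that $h\mapsto\phi_h$ really is a group action (the paper asserts this before the theorem and defers it to Chiswell), at the price of two routine checks the paper's computation avoids: that $\phi_h$ sends the embedded group element $\langle\L(k),k\rangle$ to $\langle\L(hk),hk\rangle$ (needed for your claim that $\phi_{h_1}$ carries $[h_2,h_2g]$ onto $[h_1h_2,h_1h_2g]$), and that $n\mapsto\langle n,g\rangle$ parametrizes $[1,g]$ by distance from the identity, which yields $\phi_1=\mathrm{id}$; both follow at once from the defining formula $\dist(\langle n,g\rangle,\langle m,k\rangle)=n+m-2\min\{n,m,\c(g,k)\}$, so there is no gap.
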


\begin{proof}
The theorem follows from Theorems 2.4.4, 2.4.5, and 2.4.6 in \cite{C} but a direct proof is instructive. 

Let $h,g,k\in G$ and notice $\c(g,k)=(hg\cdot hk)_h$. Thus the action can be thought of as changing the basepoint from $1$ to $h$ (see Figure \ref{tree_action}). Set 
$u=\Y(h,hg,hk)$.

We first show the action of $h$ on $\T(G)$ is well defined. Suppose 
$\langle n,g\rangle=\langle n,k\rangle\in\T(G)$, so $n\leq \c(g,k)=(hg\cdot hk)_h$. Thus $h\langle n,g\rangle,h\langle n,k\rangle\in [h,u]$ so we must have $h\langle n,g\rangle=h\langle n,k\rangle$.

To see that the action is an isometry, suppose $\langle n,g\rangle,\langle m,k\rangle\in\T(G)$. If $n\leq \c(g,k)$, then $h\langle n,g\rangle,h\langle m,k\rangle\in [h,k]$ so $\dist(h\langle n,g\rangle,h\langle m,k\rangle)=|n-m|$. A symmetric statement holds if $m\leq \c(g,k)$. If $m>\c(g,k)$ and $n>\c(g,k)$ then $h\langle n,g\rangle\in[u,hg]$ and $h\langle m,k\rangle\in[u,hk]$ so $\dist(h\langle n,g\rangle,h\langle m,k\rangle)=n+m-2\c(g,h)$.

Finally for surjectivity, suppose $\langle m,k\rangle\in\T(G)$. Set $w=\Y(1,h,k)$. If $m\leq \c(h,k)$ then $\langle m,k\rangle\in[1,w]$ and $h\langle \L(h)-m,h^{-1}\rangle=\langle m,h\rangle=\langle m,k\rangle$. If $m\geq \c(h,k)$, then $\langle m,k\rangle\in[w,k]$ and $h\langle \L(h)+m-2\c(h,k),h^{-1}k\rangle=\langle m,k\rangle$.
\end{proof}

\begin{figure}
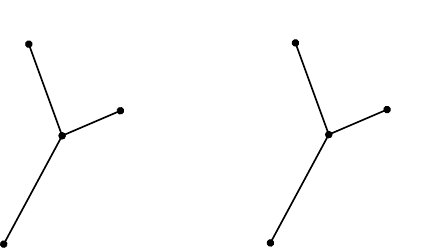
\caption{The action of $G$ on $\T(G)$}
\label{tree_action}
\end{figure}

We can read a formula for the action from Figure \ref{tree_action}. If $n\leq \c(g,h^{-1})$ then $h\cdot \langle n,g\rangle=\langle \L(h)-n,h\rangle$. If $n\geq \c(g,h^{-1})$ then $h\cdot \langle n,g\rangle=\langle \L(h)+n-2\c(g,h^{-1}),hg\rangle$. Of course there are other ways of measuring the first coordinate of $h\cdot \langle n,g\rangle$ in the second case.

The big free group $\BF(o)$ acts by isometries on the $\mathbb Z^o$-tree $\T(\BF(o))$. The action is free and without inversions.

\section{Big free groups acting on the big Cayley graph}\label{CayleyAction}

In addition to $\BF(o)$ acting on the $\mathbb Z^o$-tree $\T(\BF(o))$, we also have $\BF(o)$ acting on the big Cayley graph $\Gamma(\BF(o))$. The action can be described in  a way similar to the former action, but can also be described using a combinatorial description of $\Gamma(\BF(o))$. A description of the space of homotopy classes of paths in the Hawaiian earring is given in \cite[Example 4.15]{FZ} and can be extended to any $\Gamma(\BF(o))$. An element of $\Gamma(\BF(o))$ is an equivalence class of some $t$ in a real interval labeled by some $a^{\pm 1}\in A\cup A^{-1}$ and can be represented by a triple $(w,a^{\pm 1},t)$ where $w$ is the word that is read before the interval containing $t$. In order to obtain a unique representation, given a triple $(w,a^p,t)$, we assume that $w$ does not end in the letter $a^{-p}$. Also, if $t=0$ then no second coordinate is used and we may just write $w$. Then the action is defined as follows. Given $u\in\BF(o)$ and $(w,a^p,t)\in\Gamma(\BF(o))$, $u\cdot (w,a^p,t)=(uw,a^p,t)$ provided $uw$ does not have $a^{-p}$ as a last letter. In the case that it does, $u\cdot (w,a^p,t)=(uwa^{p},a^{-p},1-t)$.

Fischer and Zastrow show that while there is an $\mathbb R$-tree metric on $\Gamma(\BF(\aleph_0))$, there is no such metric where the action is by isometries \cite[Example 4.14]{FZ}. Their argument relies on lifts of paths via a generalized covering map of the Hawaiian earring, and they work under the assumption that the $\mathbb R$-tree metric induces a certain topology. We can extend essentially the same argument to all $\Gamma(\BF(o))$ under a natural geometric condition on the metric. The search for an $\mathbb R$-tree metric on $\BF(\aleph_0)$ is guided by the attempt to identify the big intervals with the $\mathbb R$-tree intervals. Note this identification is always impossible for uncountable alphabets for separability reasons.

\begin{proposition}\label{NoTree}
Let $o$ be an infinite ordinal number. There is no $\mathbb R$-tree metric on $\Gamma(\BF(o))$ where, for each $w\in\BF(o)$, the big interval between $\iota$ and $w$ coincides with the $\mathbb R$-tree interval $[\iota,w]$ and the action by $\BF(o)$ is by isometries.
\end{proposition}

\begin{proof}
Suppose there is such an $\mathbb R$-tree metric. Let $\{a_i\}_{i\in\mathbb N} \subset A$  and set $d_i=\dist(a_i,\iota)$. Then there are $p_i\in\mathbb N$ so that $\sum  p_id_i=\infty$. Set $w=a_1^{p_1}a_2^{p_2}a_3^{p_3}\cdots$. Then $[\iota,a_1^{p_1}]\cup[a_1^{p_1},a_1^{p_1}a_2^{p_2}] \cup[a_1^{p_1}a_2^{p_2},a_1^{p_1}a_2^{p_2}a_3^{p_3}]\cup\cdots\subset [\iota,w]$ and the intervals in the union are disjoint except for at endpoints by construction. Since the action is by isometries, $\sum  p_id_i\leq \L(w)$, a contradiction.
\end{proof}

The question still remains if there is any $\mathbb R$-tree metric on $\Gamma(\BF(o))$ that makes the action isometric. The following question may be more interesting.

\begin{question}
Does $\BF(o)$ act on any $\mathbb R$-tree by isometries?
\end{question}

In \cite{C} a group that acts by isometries freely and without inversions on some $\Lambda$-tree is called $\Lambda$-free. It is called tree-free if it is $\Lambda$-free for some $\Lambda$. There are the following inclusions: $\mathbb R$-free groups $\subset$ locally fully residually free groups $\subset$ tree-free groups. 
Note $\BF(o)$ is better than locally fully residually free--it is locally free.

The $\mathbb R^o$-metric that Cannon and Conner describe for $\Gamma(\BF(o))$ is an extension of the $\mathbb Z^o$-metric on $\BF(o)$. Given $(w,a^p,t),(v,b^q,s)\in\Gamma(\BF(o))$, define $\dist((w,a^p,t),(v,b^q,s))=\L(w)+t\L(a)+\L(v)+s\L(b)-2\min\{\L(w)+t\L(a),\L(v)+s\L(b),\c(wa,vb)\}$. In the case that $\c(wa,vb)\leq \L(w)$ and $\c(wa,vb)\leq \L(v)$, the formula becomes $\L(a^{-1}w^{-1}vb)-(1-t)\L(a)-(1-s)\L(b)$. In the case that $\L(w)+t\L(a)\leq \c(wa,wv)$ (which implies $wa$ is in the big interval $I_{vb}$), the formula becomes $\L(v)+s\L(b)-\L(w)-t\L(a)$. A similar formula holds if $\L(v)+s\L(b)\leq \c(wa,vb)$. Under this $\mathbb R^o$-metric, $\Gamma(\BF(o))$ is $0$-hyperbolic and the action of $\BF(o)$ on $\Gamma(\BF(o))$ is by isometries.

\begin{remark}
Since $\Gamma(\BF(o))$ is $0$-hyperbolic and $x\cdot y\in\mathbb R^o$ for all $x,y\in\Gamma(\BF(o))$, it embeds in an $\mathbb R^o$-tree and the action of $\BF(o)$ on $\Gamma(\BF(o))$ extends to to an isometric action on that $\mathbb R^o$-tree. Also, using the inclusion $\mathbb Z^o\hookrightarrow\mathbb R^o$ we obtain an isometric embedding of $\T(\BF(o))$ into an $\mathbb R^o$-tree. While this $\mathbb R^o$-tree differs from the one mentioned just before, in both of them a word $w\in\BF(o)$ is associated with an $\mathbb R^o$-interval $[(0,0,\ldots),\L(w)]_{\mathbb R^o}$.  Given a letter $a\in A$, the embedding of the real interval between $w$ and $wa$ in $\Gamma(\BF(o))$ into the $\mathbb R^o$-interval $[(0,0,\ldots),\L(wa)]$ is given by $\L(w)+t\L(a)$ for $0\leq t\leq 1$. The embedding of the $\mathbb Z^o$-interval between $w$ and $wa$ in $\T(\BF(o))$ into the same $\mathbb R^o$-interval is given by $\L(w)+t$ where $t\in[(0,0,\ldots),\L(a)]_{\mathbb Z^o}$. Thus the two embeddings intersect only at the endpoints.
\end{remark}

\section{A combinatorial description}

We wish to give a combinatorial description of $\T(\BF(o))$ similar to the one given above for $\Gamma(\BF(o))$. That is, given $\langle n,w\rangle\in\T(\BF(o))$, we wish to find $v\in \BF(o)$ and $a\in A$ so that $\langle n,w\rangle\in[v,va^{\pm 1}]$. We will see that such a description may not exist for $o>\omega$, the first infinite ordinal, and show that one does exist for $o=\omega$.

We will need the concept of a subword of a word in $\BF(o)$. First note that for any $\T(G)$, $h\in[1,g]$ if and only if $\L(h)+\L(h^{-1}g)=\L(g)$. The reverse direction is a direct calculation, and if $h\in[1,g]$ then $\L(h)\leq \c(g,h)=\frac{1}{2}(\L(g)+\L(h)-\L(g^{-1}h))$ so $\L(h)+\L(h^{-1}g)\leq \L(g)$. But we always have $\L(h)+\L(h^{-1}g)\geq\L(g)$ by the triangle inequality. Now consider $v,w\in\BF(o)$. The equation $\L(v)+\L(v^{-1}w)=\L(w)$ means that there is no reduction in the product of $v$ and $v^{-1}w$. In that case if we write $v:S_v\to A\cup A^{-1}$ and $w:S_w\to A\cup A^{-1}$ then we may assume $S_v\subset S_w$ and it follows that $S_v$ is a Dedekind cut of $S_w$. We call such a $v$ a subword of $w$. Cannon and Conner note that there is a one-to-one correspondence between the Dedekind cuts of $S_w$ and the subwords of $w$. This correspondence induces a linear order on the set of subwords of $w$.

If $\langle n,w\rangle\in [v,va^p]$, then $v$ is a subword of $w$.  For then $\L(v)\leq n\leq \c(w,va^p)$ and if $v^{-1}w$ does not begin with $a^p$, $\c(w,va^p)=\c(v,w)$ so $\L(v)=\c(v,w)$. If $v^{-1}w$ does begin with $a^p$ then $\c(w,va^p)=\c(v,w)+\L(a)$ and since two of $\c(w,va^p)$, $\c(v,w)$, and $\c(v,va^p)$ are equal and not greater than the third, we must have $\c(v,w)=\c(v,va^p)=\L(v)$.

We now show that if $o>\omega$ then there is $\langle n,w\rangle\in\T(\BF(o))$ such that there are no $v\in \BF(o)$ and $a\in A$ with $\langle n,w\rangle\in [v,va^{\pm 1}]$. We assume $o=\omega+1$ for convenience of notation but the argument extends to any $o>\omega$.

\begin{example}
Set $o=\omega +1$, say $A=\{a_1,a_2,\ldots,b\}$. Define the word $w=\cdots a_3a_2a_1$. Suppose there is a subword $v$ of $w$ and $a\in A$ so that $\langle \L(b),w\rangle\in [v,va^{\pm 1}]$.  Then $\L(v)\leq \L(b)$ and the only such $v$ is $\iota$. Thus $\langle \L(b),w\rangle\in[\iota,a^{\pm 1}]\cap [\iota,w]$ so $\L(b)\leq \c(a^{\pm 1},w)=0$, a contradiction.

\end{example}

The situation in the above example is that there is not a first subword of $w$ that has length at least $\L(b)$. We now show that in $\BF(\omega)$ there are no problem elements like $\langle\L(b),w\rangle$; given $\langle n,w\rangle$ we can always find the first subword of $w$ that has length at least $n$.

\begin{proposition}
Given  $\langle n,w\rangle\in\T(\BF(\omega))$, there are $v\in\BF(\omega)$ and $a\in A$ so that  $\langle n,w\rangle\in [v,va^{\pm 1}]$.
\end{proposition}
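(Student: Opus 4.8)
The plan is to locate $\langle n,w\rangle$ by finding the first subword of $w$ whose length is at least $n$, and then reading off the edge on which it lies. Recall from the discussion preceding the statement that $\langle n,w\rangle\in[v,va^p]$ exactly when $v$ is a subword of $w$, the letter of $w$ following $v$ is $a^p$, and $\L(v)\le n\le \L(v)+\L(a)=\c(w,va^p)$. So it suffices to produce a subword $v$ of $w$ with an immediate successor letter $a^p$ satisfying $\L(v)\le n\le \L(v)+\L(a)$; the degenerate case where the first such subword already has length exactly $n$ will be handled separately.

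First I would set $V=\{v: v \text{ a subword of } w,\ \L(v)\ge n\}$, which is nonempty since $w\in V$. Under the identification of subwords with Dedekind cuts of $S_w$, write $L_v\subseteq S_w$ for the downward-closed set corresponding to $v$. Since an arbitrary intersection of downward-closed sets is downward closed, $L_0=\bigcap_{v\in V}L_v$ is again a cut; let $v_0$ be the corresponding subword, so that $v_0$ is the infimum of $V$ in the order on subwords. As $\L$ is monotone on subwords, $\L(v_0)\le \L(v)$ for every $v\in V$.

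The heart of the argument, and the step that fails for $o>\omega$, is to show $\L(v_0)\ge n$, so that $v_0$ is in fact the minimum of $V$. Suppose not, so $\L(v_0)<n$, and let $i_0$ be the first coordinate with $\L(v_0)_{i_0}<n_{i_0}$, so $\L(v_0)_j=n_j$ for $j<i_0$. For any $v\in V$ the gap word $v_0^{-1}v$ has nonnegative coordinates, and a short case check on the comparison $\L(v)\ge n$ shows this gap must contain at least one letter of index $\le i_0$: either some $a_j^{\pm 1}$ with $j<i_0$ occurs, or else all coordinates below $i_0$ already agree with $n$ and coordinate $i_0$ forces at least $n_{i_0}-\L(v_0)_{i_0}>0$ occurrences of $a_{i_0}^{\pm 1}$. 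Here is where $o=\omega$ enters: $i_0$ is a finite index, so $a_1,\dots,a_{i_0}$ occur only finitely often in $w$, and hence the set $P$ of their positions lying in $R_0:=S_w\setminus L_0$ is finite. Every $v\in V$ has $L_v\cap P\ne\varnothing$, and since $L_v$ is downward closed it contains $p^*:=\min P$. Thus $p^*\in\bigcap_{v\in V}L_v=L_0$, contradicting $p^*\in R_0$. Hence $\L(v_0)\ge n$ and $v_0=\min V$. (In the Example this step collapses precisely because the offending coordinate sits at position $\omega$, so ``$a_{\le i_0}$'' is all of $A$ and $P$ has no least element.)

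Finally I would extract the edge. If $\L(v_0)=n$ then $\langle n,w\rangle=\langle \L(v_0),v_0\rangle=v_0$ is the group element itself, and $v_0\in[v_0,v_0a]$ for any letter $a$. If $\L(v_0)>n$, a second finiteness argument shows $L_0$ has a maximum: were $v_0$ a limit from below, one could enlarge a subword $v'\subsetneq v_0$ to contain the finitely many positions of $a_1,\dots,a_{i_1}$ in $L_0$, where $i_1$ is the first coordinate with $\L(v_0)_{i_1}>n_{i_1}$, forcing $\L(v')>n$ while $v'\notin V$, a contradiction. So $v_0$ has a last letter $a^q$; taking $v$ to be $v_0$ with this letter deleted gives a subword with successor letter $a^q$ and $\L(v)<n<\L(v_0)=\L(v)+\L(a)$, whence $\langle n,w\rangle\in[v,va^q]$. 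I expect the third step, showing the infimum still has length $\ge n$, to be the main obstacle, as it is the sole point where the hypothesis $o=\omega$ is used in an essential way.
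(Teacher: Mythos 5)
Your proof is correct, and it realizes the same guiding idea the paper announces just before the proposition --- find the first subword of $w$ whose length is at least $n$ --- but by a genuinely different mechanism. The paper's proof is an iterative coordinate scan: it takes the first nonzero coordinate $a$ of $n$, lists the finitely many occurrences in $w$ of letters $b\leq a$, and either finds the desired edge among them or concludes $w_b=n_b$ for all $b\leq a$ and repeats at the next coordinate where $\L(u)<n$, with the limiting case $\langle n,w\rangle=w$ if the scan never halts. You instead work order-theoretically in one shot: $v_0$ is the infimum of $V=\{v:\L(v)\geq n\}$, realized as an intersection of Dedekind cuts, and you prove (i) the infimum is attained, via your $\min P$ argument, and (ii) when $\L(v_0)>n$ the cut $L_0$ has a maximum, so the edge is read off from the last letter of $v_0$. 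Both proofs hinge on the identical finiteness fact --- only finitely many letters have index below a fixed finite index, and each occurs finitely often in $w$ --- which is exactly where $o=\omega$ is essential and where the paper's Example breaks because the offending coordinate sits at position $\omega$, as you observe. As for what each approach buys: the paper's scan is more algorithmic, while yours is cleaner about minimality and is in fact more careful at the extraction step, since the paper's ``we pick the first one and we are done'' does not spell out how the edge is located when the minimal subword of length at least $n$ falls strictly between consecutive occurrences of the low-index letters (e.g.\ inside a block of higher-index letters); your last-letter argument (ii) settles precisely this. One small imprecision: your opening ``exactly when'' is too strong as a biconditional, since $\langle\L(v),w\rangle=v$ lies in $[v,va^p]$ even when $a^p$ does not follow $v$ in $w$; but you only use the easy sufficient direction, and you handle the endpoint case $\L(v_0)=n$ separately, so nothing in the argument is affected.
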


\begin{proof}
We may assume $n>0$ since $\langle 0,w\rangle=\iota$. Let $a_1\in A$ be the first index where the coordinate of $n$ has nonzero value. Note this value is positive since $n>0$. There are only finitely many $b\leq a_1$ and only finitely many occurrences of each $b$ and $b^{-1}$ in $w$ so we may list them in the order induced by the order on subwords of $w$. Notice any occurrence of a $b$ or $b^{-1}$ for $b<a_1$ will cause the corresponding subword to have length larger than $n$. We look for the first instance of one of two conditions: (1) an occurrence of a $b$ or $b^{-1}$ for $b<a_1$ or (2) an occurrence of $a_1$ or $a_1^{-1}$ that causes the length of the corresponding subword to equal $n$ in the $a_1$ coordinate. 

Under the first condition we are done; let $w(s)$ be the occurrence of the $b$ or $b^{-1}$ and take $v$ to be $w$ restricted to $(-\infty,s)$. Suppose the second condition is met and let $u$ be the resulting subword. If $\L(u)\geq n$ we are done also. Otherwise let $a_2$ be the first index where the coordinate of the length of $u$ is less than the coordinate of $n$. Note $a_2>a_1$. We follow the procedure above, starting by listing the occurrences of $b$ and $b^{-1}$ for $b\leq a_2$. If the process never terminates we find $\L(w)=n$.
\end{proof}

Thus we can represent any element of $\T(\BF(\omega))$ as a triple $(w,a^p,t)$ where $w\in\BF(\omega)$, $a\in A$, $p=\pm1$, and $t\in[0,\L(a))$ (the triple represents the element $\langle \L(w)+t,wa^p\rangle$). We have restrictions analogous to those for $\Gamma(\BF(\omega))$ and can give a description of the action and distance function in the same fashion as well. Given $u\in \BF(\omega)$ and $(w,a^p,t)\in \T(\BF(\omega))$,  $u\cdot(w,a^{p},t)=(uw,a^{p},t)$ unless $uw$ ends in $a^{-p}$ in which case $u\cdot(w,a^{p},t)=(uwa^p,a^{-p},\L(a)-t)$. Given $(w,a^p,t),(v,b^q,s)\in\T(\BF(o))$, $\dist((w,a^p,t),(v,b^q,s))=\L(w^{-1}v)+t+s$ unless $w=v$ and $a^p=b^q$ in which case we simply have $|t-s|$.

Now we see that the above combinatorial representation is unique. Suppose $(w,a^p,t)$ and $(v,b^q,s)$ represent the same element in $\T(\BF(\omega))$. Then $\L(w)+t=\L(v)+s\leq \c(wa^p,vb^q)$ so $t+s+\L(b^{-q}v^{-1}wa^p)\leq \L(a)+\L(b)$. Thus we must have $w=v$ since otherwise $\L(b^{-q}v^{-1}wa^p)>\L(a)+\L(b)$. Therefore $t=s$ and $t+s+\L(b^{-q}a^p)\leq \L(a)+\L(b)$. If $t=s>0$ then we must have $a^p=b^q$. If $t=s=0$ then we do not use a second coordinate.

As an application of our combinatorial description, we describe the quotient of the tree $\T(\BF(\omega))$ under the action of $\BF(\omega)$ as the wedge of $\mathbb Z^\omega$-circles $C_a$ for $a\in A$. Given $a\in A$, let $C_a$ be the $\mathbb Z^\omega$-interval $[0,\L(a)]$ with the endpoints identified. Define the distance between points $s$ and $t$ in the circle to be $\min\{|s-t|,\L(a)-|s-t|\}$. The formula works with both $0$ and $\L(a)$ used for the identification point.

The quotient is the set $\T(\BF(\omega))$ under the identification of elements that are in the same orbit. Thus we map an element $(w,a^p,t)$ of the quotient to the point $t\in C_a$ if $p=1$ and the point $\L(a)-t$ if $p=-1$. We show that this mapping is a bijection. 

Let $(w,a^p,t)\in\T(\BF(\omega))$ and $u\in\BF(\omega)$. Then $u\cdot (w,a^p,t)=(uw,a^p,t)$ unless $uw$ ends in $a^{-p}$ in which case $u\cdot (w,a^p,t)=(uwa^p,a^{-p},\L(a)-t)$. First suppose $p=1$. In the first case both $(w,a,t)$ and $u\cdot (w,a,t)$ are sent to $t\in C(a)$. In the second case $u\cdot (w,a,t)=(uwa,a^{-1},\L(a)-t)$  is also sent to $t\in C(a)$. If $p=-1$ we find that all of the elements are sent to $\L(a)-t$. Thus the mapping is well defined. Now suppose $(w,a^p,t)$ and $(v,a^q,s)$ are mapped to the same point $r\in C(a)$.
If $p=q=1$ we obtain $t=s$ and $(v,a,t)=vw^{-1}\cdot (w,a,t)$ since $vw^{-1}w=v$ does not end in $a^{-1}$ by assumption. If $p=1$ and $q=-1$ we have $s=\L(a)-t$ and $(v,a^{-1},\L(a)-t)=va^{-1}w^{-1}\cdot (w,a,t)$ since $va^{-1}w^{-1}w=va^{-1}$ does end in $a^{-1}$ ($v$ does not end in $a$). The other cases for $p$ and $q$ are handled in a similar fashion. Finally, given $t\in C_a$ we have $(\iota,a,t)$ in the quotient space which is sent to $t$.

We define a $\mathbb Z^\omega$-metric on the quotient space. Given $(w,a^p,t)$ and $(v,b^q,s)$, define the distance in the quotient space to be $t+s$ unless $w=v$ and $a^p=b^q$ in which case we set the distance to be $|t-s|$. It is the quotient metric inherited from $\T(\BF(\omega))$ and it is easy to check that it is definite. If the wedge $\bigvee C_a$ is given the standard wedge metric then the mapping is an isometry.

\section{The induced topology}

The fact that $\BF(\omega)$ is isomorphic to the fundamental group of the Hawaiian earring suggests a topology for it that is inherited from a standard topology on the space of fixed endpoint homotopy classes of paths called the whisker topology. Given a path $\alpha$ in $X$ and a neighborhood $U$ of the endpoint of $\alpha$, the basis element $\B([\alpha],U)=\{[\beta]: \beta=\alpha\gamma$ for some path $\gamma$ whose image lies in $U\}$ (see \cite{FZ}). We define the following topology on $\BF(o)$ following the the above model. Let $w\in\BF(o)$ and $a\in A$. Define $\B(w,a)=\{v:v=wu$ where $u\in\BF(o)$ has each letter greater than $a\}$. It is easy to check that the $\mathbb Z^o$-metric on $\BF(o)$ induces this topology.

\end{document}